\theoremstyle{definition}
\newtheorem{definition}{Definition}
\newtheorem{Remark}[definition]{Remark}
\theoremstyle{plain}
\newtheorem{theorem}[definition]{Theorem} 
\newtheorem{theoremintro}{Theorem}
\newtheorem{introcor}[theoremintro]{Corollary}
\newtheorem{lemma}[definition]{Lemma}
\newtheorem{corollary}[definition]{Corollary}
\DeclareMathOperator{\GL}{GL}
\DeclareMathOperator{\SL}{SL}
\DeclareMathOperator{\SU}{SU}
\DeclareMathOperator{\SP}{SP}
\DeclareMathOperator{\Aut}{Aut}
\DeclareMathOperator{\Irr}{Irr}
\DeclareMathOperator{\ord}{ord}
\newcommand{\N}{\mathbb{N}}
\newcommand{\F}{\mathbb{F}}
\newcommand{\Z}{\mathbb{Z}}
\numberwithin{definition}{section}
\numberwithin{equation}{section}
\title[Semi-extraspecial $p$-groups]{Semi-Extraspecial $p$-groups with automorphisms of large order}
\author[Brenner]{Sofia Brenner}
\address{Department of Mathematics, TU Darmstadt, 64289 Darmstadt, Germany.}
\email{sofia.brenner@uni-kassel.de}
\author[Camina]{Rachel D. Camina}
\address{Fitzwilliam College,  Cambridge, CB3 0DG, UK.} 
\email{rdc26@cam.ac.uk}
\author[Lewis]{Mark L. Lewis}
\address{Department of Mathematical Sciences, Kent State University, Kent, OH 44242, USA.}
\email{lewis@math.kent.edu}
\subjclass[2020]{20D15}
\keywords{Semi-extraspecial $p$-groups, automorphisms, special unitary group}
\begin{document}

\begin{abstract}
In this paper, we consider semi-extraspecial $p$-groups $G$ that have an automorphism of order $|G:G'| - 1$.  We prove that these groups are isomorphic to Sylow $p$-subgroups of $\SU_3 (p^{2a})$ for some integer $a$.  If $p$ is odd, this is equivalent to saying that $G$ is isomorphic to a Sylow $p$-subgroup of $\SL_3 (p^a)$.
\end{abstract}

\maketitle

\section{Introduction}

All groups in this paper are finite.  A $p$-group~$G$ is called \emph{semi-extraspecial} if $G/N$ is extraspecial for every maximal subgroup~$N$ of $Z(G)$.  Semi-extraspecial $p$-groups and, more generally, Camina $p$-groups have been a focus of extensive research (for instance, see \cite{BEI77, DaSc, LEW18, LEW19, Mac, Ver}). We present an overview in Section~\ref{background}. 
\medskip

In this paper, we study semi-extraspecial $p$-groups $G$ that possess an automorphism of order $|G:G'|-1$, where $G'$ denotes the derived subgroup as customary. Our motivation is two-fold. On the one hand, we show that this case is extremal in the sense that $|G:G'|-1$ is the maximal possible order of a $p'$-automorphism of a semi-extraspecial group $G$. On the other hand, semi-extraspecial $p$-groups with 
an automorphism of this order play a fundamental role in the first author's work~\cite{brenner,brku} on group algebras of finite groups in which the socle of the center is an ideal: essentially, groups with this property can be built as central products of extensions of semi-extraspecial $p$-groups $G$ by automorphisms of order $|G:G'|-1$ (see \cite[Theorem~B]{brenner}).

In our main theorem, we classify the semi-extraspecial $p$-groups that possess such an automorphism. Our main result is the following: 

\begin{theoremintro} \label{Intro}
Let $p$ be a prime number and let $G$ be a semi-extraspecial $p$-group. Then $G$ possesses an automorphism of order $|G:G'|-1$ if and only if $G$ is isomorphic to a Sylow $p$-subgroup of $\SU_3 (p^{2a})$ where $|G:G'| = p^{2a}$.
\end{theoremintro}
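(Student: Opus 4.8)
The plan is to prove the two implications separately. The easier direction is that a Sylow $p$-subgroup $G$ of $\SU_3(p^{2a})$ admits an automorphism of order $|G:G'|-1 = p^{2a}-1$. Here I would exhibit the automorphism explicitly: such a $G$ is the group of unipotent upper-triangular matrices in $\SU_3(p^{2a})$, and conjugation by a suitable diagonal torus element of $\GU_3(p^{2a})$ (or, equivalently, by an element of a maximally split torus of the ambient group normalizing $G$) induces an automorphism whose order one computes directly to be $p^{2a}-1$. Concretely, a diagonal matrix $\mathrm{diag}(\lambda, \lambda^{q}\lambda^{-1}, \lambda^{-q})$-type element acts on the two ``coordinate'' subgroups $G/G'$ by a scalar of order $q^2-1 = p^{2a}-1$; one checks this scalar action is faithful on $G/G'$ and hence the induced automorphism of $G$ has the claimed order. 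This is essentially bookkeeping with the unitary form, so I would keep it brief and cite the standard description of these Sylow subgroups (they are the semi-extraspecial groups of order $p^{3a}$ with $|G:G'|=p^{2a}$ and $|G'|=p^a$ carrying a Hermitian-type commutator form).

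For the forward direction, suppose the semi-extraspecial $p$-group $G$ has an automorphism $\varphi$ of order $n := |G:G'|-1$. Write $|G:G'| = p^m$, so $n = p^m - 1$, and let $|G'| = p^d$; semi-extraspeciality forces $d \le m$ and in fact $m$ is divisible by $d$ with $G/G'$ an $\F_{p^d}$-vector space of dimension $m/d$ carrying a nondegenerate alternating-type form valued in $G' \cong \F_{p^d}$ (this is the structure theory recalled in Section~\ref{background}). The automorphism $\varphi$ acts on the elementary abelian group $V := G/G'$ of order $p^m$, and since $\varphi$ has order $p^m-1$ it acts as an element of order dividing $p^m-1$ in $\GL_m(p) = \GL(V)$. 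The crucial first step is to show this action is \emph{irreducible}: an element of $\GL_m(p)$ of order exactly $p^m-1$ is a Singer cycle, so $V$ becomes a $1$-dimensional $\F_{p^m}$-vector space on which $\langle\varphi\rangle$ acts by multiplication by a generator of $\F_{p^m}^\times$. Hence $p^m - 1 \le$ the order of a Singer cycle, with equality, pinning the action down completely. Next I would transport this $\F_{p^m}$-structure through the commutator form: $\varphi$ also acts on $G'$, and compatibility of the commutator map $V \times V \to G'$ with $\varphi$ means $G'$ is a $\langle\varphi\rangle$-module of dimension $d$ over $\F_p$ on which $\varphi$ acts with eigenvalues that are ($(q')$-th powers) products of the eigenvalues on $V$; a counting/Galois argument then shows $d \mid m$ forces $m = 2d$ — because the form restricted to the Singer action can only be nondegenerate and $\varphi$-equivariant if $\varphi$ acts on $G'$ through the norm map $\F_{p^{2d}}^\times \to \F_{p^d}^\times$, which requires $m = 2d$. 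Setting $a := d$, this gives $|G:G'| = p^{2a}$, $|G'| = p^a$, and the commutator form is (similar to) a Hermitian form over $\F_{p^{2a}}/\F_{p^a}$.

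The final step is to recognize $G$ from this data. Having shown $|G| = p^{3a}$ and that the commutator form $V \times V \to G'$ is nondegenerate, $\varphi$-equivariant (with $\varphi$ a Singer element acting compatibly via norm), I would argue this form is, up to equivalence, the standard Hermitian commutator form, so that $G$ is isomorphic to the Sylow $p$-subgroup of $\SU_3(p^{2a})$; for $p$ odd, isomorphism type of a group of exponent $p$ and class $2$ is determined by the commutator form, and for $p=2$ one must additionally match the squaring map, which here is also forced by $\varphi$-equivariance. The main obstacle I anticipate is precisely this rigidity argument pinning down $m = 2d$ and the form: a priori one might worry about other semi-extraspecial groups with $|G:G'|$ large (e.g.\ Sylow subgroups of $\SL_3(p^a)$ have $|G:G'| = p^{2a}$ too, but $|G'| = p^a$, consistent with the above; groups with $|G:G'| = |G'|$, like Sylow subgroups of $\SL_3$, vs.\ the unitary ones — for $p$ odd these coincide as noted in the abstract). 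Handling the interaction between the Singer action on $V$, the $\F_p$-dimension $d$ of $G'$, and the nondegeneracy of the form — and ruling out all $(m,d)$ with $m \ne 2d$ — is where the real work lies; I would approach it by decomposing $G'\otimes\overline{\F_p}$ into $\varphi$-eigenspaces and tracking which products of $V$-eigenvalues can appear, using that $\varphi$ has the single ``large'' order $p^m-1$ to force the norm map and hence $m=2d$.
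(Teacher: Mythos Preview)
Your proposal follows a different route from the paper and contains a structural error that undermines the forward direction. You assert that semi-extraspeciality forces $d \mid m$ and endows $G/G'$ with an $\F_{p^d}$-vector-space structure carrying an $\F_{p^d}$-valued alternating form; this is not part of the structure theory in Section~\ref{background} and is false in general (Lemma~\ref{lemma:ses} only gives $b \le a$ where $|G:Z(G)|=p^{2a}$ and $|Z(G)|=p^b$, with no divisibility or field structure). Without that premise, your norm-map argument for $m=2d$ has no foundation, and the eigenvalue analysis you sketch (tracking which products $\zeta^{p^i+p^j}$ occur on $G'$) would have to establish from scratch both the Galois-orbit structure of $\varphi|_{G'}$ and the nondegeneracy constraints --- this is substantial, not a ``counting/Galois argument''. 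The final recognition step is also a genuine gap: for $p=2$, saying the squaring map is ``forced by $\varphi$-equivariance'' is an assertion, not an argument, and even for odd $p$ the claim that the isomorphism type is determined by the commutator form is essentially the content of Beisiegel's theorem, which you would be re-proving.

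The paper's route avoids all of this. Rather than eigenvalue-chasing, it observes (Lemma~\ref{max auto}) that if $d=\ord(\sigma_{Z(G)})$ then $\sigma^d$ fixes $Z(G)$ pointwise and hence preserves each form $\langle\cdot,\cdot\rangle_\phi$, so its induced action on $G/Z(G)$ lies in $\SP_{2a}(p)$. The arithmetic core (Lemma~\ref{max SP}) is then a statement about $p'$-element orders in $\SP_{2a}(p)$: using the maximal tori $\prod Z_{p^{j_i}\pm 1}$ and Zsigmondy primes of $p^{2a}-1$, any such order divisible by all Zsigmondy primes must divide $p^a+1$. From $\ord(\sigma)=cd=p^{2a}-1$ with $d\le p^b-1\le p^a-1$ one gets $c\ge p^a+1$, hence $c=p^a+1$, $d=p^a-1$, and $b=a$ (ultraspecial). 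Recognition is then outsourced to Beisiegel's theorem (Theorem~\ref{theorem:beisiegel}), applied to $\psi=\sigma^{p^a-1}$, which acts trivially on $Z(G)$ and irreducibly on $G/Z(G)$ with order $p^a+1$: this yields $G\cong Q/Z$ for $Q$ a Sylow $p$-subgroup of $\SU_3(p^{2a})$, and comparing orders gives $Z=1$. Your eigenvalue strategy is morally what underlies Beisiegel's proof, but the paper deliberately does not redo that work.
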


Here, $\SU_3(p^{2a})$ denotes the 3-dimensional special unitary group over the field with $p^{2a}$ elements. The structure of its Sylow $p$-subgroups is well-understood.  When~$p$ is odd, they are isomorphic to the Sylow $p$-subgroups of $\SL_3 (p^a)$, which are commonly known as {\it Heisenberg groups}. We thus obtain the following immediate corollary: 

\begin{introcor} \label{Intro odd}
Let $p$ be an odd prime number and let $G$ be a semi-extraspecial $p$-group. Then $G$ possesses an automorphism of order $|G:G'|-1$ if and only if $G$ is isomorphic to a Sylow $p$-subgroup of $\SL_3 (p^a)$ where $|G:G'| = p^{2a}$.	
\end{introcor}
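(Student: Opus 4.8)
The plan is to deduce Corollary~\ref{Intro odd} directly from Theorem~\ref{Intro}, the only additional ingredient being a classical, purely local statement about Sylow subgroups. By Theorem~\ref{Intro}, for every prime $p$ a semi-extraspecial $p$-group $G$ admits an automorphism of order $|G:G'|-1$ if and only if $G$ is isomorphic to a Sylow $p$-subgroup of $\SU_3(p^{2a})$, where $a$ is fixed by $|G:G'|=p^{2a}$. Such an $a$ always exists: for a semi-extraspecial $p$-group one has $G'=Z(G)=\Phi(G)$, and taking any maximal subgroup $N$ of $Z(G)$ the quotient $G/N$ is extraspecial, so $|G:G'|=|G:Z(G)|=|G:N|/|Z(G):N|$ is an even power of $p$. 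Hence, to pass from $\SU_3(p^{2a})$ to $\SL_3(p^a)$ when $p$ is odd, it suffices to show that a Sylow $p$-subgroup of $\SU_3(p^{2a})$ is then isomorphic to a Sylow $p$-subgroup of $\SL_3(p^a)$, i.e.\ to the Heisenberg group over $\F_{p^a}$.

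First I would fix explicit models for the two Sylow subgroups. A Sylow $p$-subgroup $P$ of $\SL_3(p^a)$ is the group of upper unitriangular $3\times 3$ matrices over $\F_{p^a}$; coordinatising it as $\F_{p^a}^3$ with product $(x,y,z)(x',y',z')=(x+x',\,y+y',\,z+z'+xy')$, one has $Z(P)=P'=\{(0,0,z)\}$ of order $p^a$. Realising $\SU_3(p^{2a})$ inside $\SL_3(\F_{p^{2a}})$ with respect to a nondegenerate Hermitian form for the quadratic extension $\F_{p^{2a}}/\F_{p^a}$, and writing $\bar u=u^{p^a}$ for the corresponding field automorphism, a Sylow $p$-subgroup $Q$ is the stabiliser of an isotropic point, parametrised by pairs $(\alpha,\beta)\in\F_{p^{2a}}\times\F_{p^{2a}}$ subject to a trace--norm relation of the form $\beta+\bar\beta=\alpha\bar\alpha$, with a multiplication of the same unitriangular shape. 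Both $P$ and $Q$ have order $p^{3a}$, are ultraspecial with centre of order $p^a$, and have exponent $p$ when $p$ is odd.

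The key step is to construct an isomorphism $P\to Q$. When $p$ is odd, $2$ is invertible in $\F_{p^{2a}}$, so every $\beta$ satisfying $\beta+\bar\beta=\alpha\bar\alpha$ decomposes uniquely as $\beta=\tfrac12\alpha\bar\alpha+\gamma$ with $\gamma+\bar\gamma=0$; the trace-zero elements $\gamma$ form a one-dimensional $\F_{p^a}$-subspace of $\F_{p^{2a}}$, spanned by an element $\theta$ with $\bar\theta=-\theta$. Writing $\alpha$ in the $\F_{p^a}$-basis $\{1,\theta\}$ as $\alpha\leftrightarrow(x,y)$ and $\gamma\leftrightarrow\theta z$, substituting into the product of $Q$ and using $2^{-1}$ to absorb the symmetric term $\tfrac12\alpha\bar\alpha$ into the centre coordinate, the multiplication collapses to the Heisenberg product on $\F_{p^a}^3$ up to an $\F_{p^a}$-linear change of the $(x,y)$-coordinates induced by the trace form; this yields $Q\cong P$. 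Together with Theorem~\ref{Intro}, the corollary follows.

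I expect the real obstacle to lie entirely outside the corollary: the serious input is Theorem~\ref{Intro}, while the isomorphism of Sylow $p$-subgroups of $\SU_3(p^{2a})$ and $\SL_3(p^a)$ for odd $p$ is classical and may simply be quoted from the standard references on these groups. The one point requiring genuine care, and the reason for restricting to odd $p$, is the parity: for $p=2$ the decomposition $\beta\mapsto\tfrac12\alpha\bar\alpha+\gamma$ is unavailable (indeed the trace-zero elements of $\F_{2^{2a}}$ are exactly $\F_{2^a}$, which contains $1$), and the statement actually fails --- already for $a=1$ a Sylow $2$-subgroup of $\SU_3(2)$ is quaternion of order $8$ whereas a Sylow $2$-subgroup of $\SL_3(2)$ is dihedral of order $8$. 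This is precisely why Theorem~\ref{Intro} keeps $\SU_3$ for all primes while the corollary specialises to $\SL_3$ only for $p$ odd.
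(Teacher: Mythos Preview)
Your reduction is exactly the one the paper makes: Corollary~\ref{Intro odd} is obtained from Theorem~\ref{Intro} together with the fact that, for odd $p$, a Sylow $p$-subgroup of $\SU_3(p^{2a})$ is isomorphic to a Sylow $p$-subgroup of $\SL_3(p^a)$. Where you diverge from the paper is in how you justify this last isomorphism. The paper (in Remark~\ref{rem:slsu}) does not compute with coordinates; instead it observes that a Sylow $p$-subgroup $P$ of $\SU_3(p^{2a})$ is ultraspecial of order $p^{3a}$ with all centralizers of noncentral elements abelian, and then invokes Verardi's classification \cite[Theorem~5.10]{Ver} to conclude $P$ is a Sylow $p$-subgroup of $\SL_3(p^a)$. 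Your explicit change of variables via $\beta=\tfrac12\alpha\bar\alpha+\gamma$ and the basis $\{1,\theta\}$ with $\bar\theta=-\theta$ is precisely the ``direct proof using unitriangular matrices'' that the paper alludes to but does not carry out. Your route is more self-contained and makes transparent exactly where oddness of $p$ enters (the factor $\tfrac12$), while the paper's route is shorter but imports a nontrivial external characterization. Both are valid; your discussion of the $p=2$ obstruction, including the concrete $Q_8$ versus $D_8$ example for $a=1$, matches the paper's remark that the Sylow $2$-subgroups differ (the paper phrases this as a difference in involution counts).
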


In contrast, we note that the Sylow $2$-subgroups of $\SU_3 (2^{2a})$ are not isomorphic to the Sylow $2$-subgroups of $\SL_3 (2^a)$.
 \medskip
 
The paper is organized as follows: in Section~\ref{background}, we summarize some well-known facts on semi-extraspecial $p$-groups and their automorphisms. In Section~\ref{sec:proofmainresults}, we then prove our main results. 

\section{Background}\label{background}

Let~$G$ be a group. As is customary, we write $G'$ for its derived subgroup and $Z(G)$ for its center, we set $\Phi(G)$ for its Frattini subgroup, we let $\Aut(G)$ denote the automorphism group of~$G$, and we write $\Irr(G)$ for the set of irreducible characters of~$G$.  Denote $\ord(g)$ for the order of an element $g \in G$, and $\max(G)$ for the maximal order of an element in $G$. For a prime number~$p$, a \emph{$p'$-automorphism} of $G$ is an automorphism of order not divisible by~$p$. 

Let $G$ be a $p$-group for some prime number $p$.  We set $\Omega (G) \coloneqq \langle x \colon x^p = 1 \rangle$ and $\mho (G) \coloneqq \langle x^p \colon x \in G \rangle$. 
Recall that~$G$ is called \emph{special} if either~$G$ is elementary abelian, or if $G$ has class $2$ and $G' = Z(G) = \Phi(G)$ holds. The group~$G$ is called \emph{extraspecial} if $|G'| = |Z(G)| = p$ holds (and hence $G$ is special). 

For $n, k \in \N$, a prime number $p$ and $q \coloneqq p^k$, we write 
$\GL_n (q)$ for the {\it general linear group} of $n \times n$-matrices with entries in $\F_q$ whose determinant is not $0$. It is well-known that $\max(\GL_n(q)) = q^n-1$. Elements of this order are called \emph{Singer cycles}. Let $\SL_n (q)$ denote the {\it special linear group} of dimension $n$ over $\F_q$.  This is the subgroup of $\GL_n(q)$ consisting of elements of determinant~$1$.   

Finally, we write $\SU_n(q^2)$ for the {\it special unitary group} of dimension~$n$ over the field $\F_{q^2}$.  We see that $\SU_n (q^2)$ is the subgroup of unitary matrices of determinant~$1$ in $\GL_n(q^2)$. Recall that a matrix $U$ is {\it unitary} if $U^{-1} = \overline U^t$ where $\overline {U}$ means to take the $q^{\rm th}$ powers of each entry of $U$.  (See \cite{atlasart} or \cite[Section II.10]{HUP67}).

It is well-known that $\GL_k (p)$ can be thought of as the automorphism group of a vector space $V$ of dimension $k$ over the field of $p$ elements.  When $V$ admits an alternating non-degenerate bilinear form, then the dimension of $V$ must be even, say $2k$, and the subgroup of $\GL_{2k} (p)$ that preserves this form is the {\it symplectic group} $\SP_{2k} (p)$. (Again see \cite{atlasart}).

A finite group $G$ is called a \emph{Camina group} if for all $g \in G \setminus G'$, we have that~$g$ is conjugate to all elements in $gG'$. This is one of several equivalent conditions that can be used to define Camina groups (see \cite[Lemma~2.2]{LEW18}). Dark and Scoppola have proved that a Camina group is either a Frobenius group whose Frobenius complement is cyclic or the quaternion group, or a $p$-group (see \cite[Main theorem]{DaSc}). 

The $p$-groups that are Camina groups have a very special structure.  MacDonald has shown that Camina $2$-groups have nilpotence class $2$ (see \cite[Theorem~3.1]{Mac}). Dark and Scoppola have proved in \cite{DaSc} that the nilpotence class of a Camina $p$-group for odd $p$ is~$2$ or~$3$.  For Camina $p$-groups of nilpotence class $2$, there is an equivalent characterization.  A $p$-group~$G$ is called \emph{semi-extraspecial} (S.E.S.) if $G/N$ is extraspecial for every subgroup $N$ that is a maximal subgroup of $Z(G)$.  Verardi has proved in \cite{Ver} that~$G$ is a Camina $p$-group of nilpotence class $2$ if and only if~$G$ is a semi-extraspecial $p$-group.  The third author has written an expository article \cite{LEW18} that covers many of the known results including the ones mentioned in this paper about Camina groups and related objects.

Since the groups of interest in this paper are Camina groups of nilpotence class~$2$, they are semi-extraspecial $p$-groups, and we will refer to them as such for the remainder of this paper.  The third author has also written a paper with an extensive expository account  on the known results on semi-extraspecial $p$-groups in~ \cite{LEW19}. We collect well-known facts on semi-extraspecial $p$-groups: 

\begin{lemma}[{\cite[Lemma 1 and Satz 1]{BEI77} and \cite[Section~2]{brown}}]\label{lemma:ses}
Let $p$ be a prime and let $G$ be a semi-extraspecial $p$-group. Then the following hold: 
\begin{enumerate}
\item We have $G' = Z(G) = \Phi(G)$, so $G/Z(G)$ elementary abelian. 
\item We have $|G: Z(G)| = p^{2a}$ for some integer $a$. 
\item The group $Z(G)$ is elementary abelian. Writing $|Z(G)| = p^b$, we have $b \leq a$, where $a$ is as in (2).
\item For a nontrivial character $\phi \in \Irr(G)$, setting $\langle x Z(G), y Z(G) \rangle_\phi = \phi([x,y])$ for all $x,y \in G$ defines an alternating bilinear form on $G/Z(G)$. 
\end{enumerate}
\end{lemma}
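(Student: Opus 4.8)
The plan is to derive all four assertions from the single structural input --- that $G/N$ is extraspecial for every maximal subgroup $N$ of $Z(G)$ --- together with elementary commutator calculus in a group of nilpotence class $2$; only the inequality in (3) requires a genuine idea. For \emph{part (1)}, we may assume $G\neq 1$, so that $Z(G)\neq 1$; then $G$ is nonabelian (an extraspecial group is). Fix a maximal subgroup $N\le Z(G)$, so $|Z(G)/N|=p$. Extraspeciality of $G/N$ gives $(G/N)'=Z(G/N)=\Phi(G/N)$ of order $p$, and since $Z(G)/N\le Z(G/N)$ this common subgroup is exactly $Z(G)/N$. From $(G/N)'=G'N/N$ we get $G'N=Z(G)$, and from $\mho(G/N)=\mho(G)N/N\le\Phi(G/N)$ we get $\mho(G)N\le Z(G)$, for every maximal $N\le Z(G)$. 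The first identity forces $G'\le Z(G)$; and if $G'$ were a proper subgroup of $Z(G)$ it would lie in some maximal $N$, whence $G'N=N\neq Z(G)$, a contradiction. Thus $G'=Z(G)$, and likewise $\mho(G)\le Z(G)$, so $\Phi(G)=G'\mho(G)=Z(G)$ and $G/Z(G)=G/\Phi(G)$ is elementary abelian.

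Granting (1), $G$ has class $2$ with $\mho(G)\le Z(G)=G'$, so $[x,y]^p=[x^p,y]=1$ for all $x,y\in G$ (using $[x^n,y]=[x,y]^n$ in class $2$ and $x^p\in Z(G)$); since $G'=Z(G)$ is abelian and generated by commutators, it is elementary abelian, which is the first half of \emph{(3)}. Write $|Z(G)|=p^b$. For \emph{(2)}, an extraspecial group has order $p^{1+2n}$ for some $n$, so $|G:Z(G)|=|G/N|/|Z(G)/N|=p^{2n}$; set $a:=n$. For \emph{(4)}, Schur's lemma gives $\phi|_{Z(G)}=\phi(1)\lambda_\phi$ for a linear character $\lambda_\phi$ of $Z(G)$, so $\phi([x,y])/\phi(1)=\lambda_\phi([x,y])$; this depends only on the cosets $xZ(G),yZ(G)$ and is bi-multiplicative and alternating by the class-$2$ identities $[xz,y]=[x,y]$ (for $z\in Z(G)$), $[xx',y]=[x,y][x',y]$, $[x,x]=1$, i.e.\ it is an alternating $\F_p$-bilinear form on $G/Z(G)$ valued in $\mu_p\cong\Z/p$. (This is the form $A_{\lambda_\phi}$ appearing below.)

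The one step beyond bookkeeping is $b\le a$ in \emph{(3)}. First, $[g,G]=Z(G)$ for every $g\in G\setminus Z(G)$: in each extraspecial quotient $G/N$ one has $gN\notin Z(G/N)=Z(G)/N$, so $[g,G]N/N$ is a nontrivial subgroup of $Z(G)/N$, hence equals $Z(G)/N$; thus $[g,G]N=Z(G)$ for all maximal $N\le Z(G)$, and the argument from (1) yields $[g,G]=Z(G)$. Now regard $V:=G/Z(G)\cong\F_p^{2a}$ and $Z(G)\cong\F_p^{b}$ as $\F_p$-spaces, let $\beta\colon V\times V\to Z(G)$ be the $\F_p$-bilinear alternating commutator form $\beta(xZ(G),yZ(G))=[x,y]$, and for $\lambda$ in the dual $Z(G)^{\ast}$ set $A_\lambda:=\lambda\circ\beta$. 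For $\lambda\neq 0$ the form $A_\lambda$ is non-degenerate, its radical $\{xZ(G):[x,G]\subseteq\ker\lambda\}$ being trivial because $[x,G]=Z(G)\not\subseteq\ker\lambda$ whenever $x\notin Z(G)$. Choosing a basis of $V$, the Gram matrix of $A_\lambda$ is skew-symmetric with entries $\F_p$-linear in $\lambda$, so $P(\lambda):=\operatorname{Pf}(A_\lambda)$ is a homogeneous polynomial of degree $a$ in the $b$ coordinates of $\lambda$, vanishing exactly where $A_\lambda$ is degenerate; hence $P$ has no zero in $\F_p^{b}\setminus\{0\}$. If $b>a$ then $\deg P=a<b$, so by the Chevalley--Warning theorem the number of zeros of $P$ in $\F_p^{b}$ is divisible by $p$; as $0$ is a zero, there would be a nonzero one --- a contradiction. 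Therefore $b\le a$. I expect this to be the main obstacle, and the delicate point is the factor $2$: working with $\det A_\lambda$ (degree $2a$) instead of $\operatorname{Pf}$ would only yield $b\le 2a$, so passing through the Pfaffian is essential.
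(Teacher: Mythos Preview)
The paper does not give its own proof of this lemma; it is stated with citations to \cite{BEI77} and \cite{brown} and used as background. So there is no ``paper's proof'' to compare against directly, only the original sources.

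Your argument is correct throughout. Parts (1), (2), the first half of (3), and (4) are standard bookkeeping and you have handled them cleanly; your treatment of (4) via the central character $\lambda_\phi$ is the right way to make sense of the slightly loose statement (and matches how the paper itself uses the form later, with $\phi\in\Irr(Z(G))$ rather than $\Irr(G)$).

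The interesting part is the inequality $b\le a$, and here your route differs from Beisiegel's. His original argument in \cite{BEI77} proceeds by locating a suitably large abelian subgroup between $Z(G)$ and $G$ and comparing indices. Your approach---packaging the commutator as an alternating $\F_p$-bilinear map $V\times V\to Z(G)$, taking the Pfaffian of the Gram matrix of $A_\lambda$ to obtain a degree-$a$ polynomial in the $b$ coordinates of $\lambda$, and then invoking Chevalley--Warning to force $b\le a$---is correct and rather elegant. Your remark about the factor $2$ is exactly the point: using $\det A_\lambda$ would only give $b\le 2a$, which is the trivial bound one gets from the surjection $hZ(G)\mapsto[g,h]$; passing to the Pfaffian is what recovers the sharp inequality. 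The argument works uniformly for $p=2$ since the form is genuinely alternating (zero diagonal from $[x,x]=1$), so the Pfaffian is defined. What your approach buys is a self-contained linear-algebraic proof that avoids any search for abelian subgroups; what Beisiegel's buys is an explicit structural witness (an abelian subgroup of the right size) that is sometimes useful downstream.
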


Using the notation of Lemma~\ref{lemma:ses}, a semi-extraspecial $p$-group $G$ is called \emph{ultraspecial} when $a = b$. 

\section{Proofs of the Main Results}\label{sec:proofmainresults}

Throughout, let $p$ be a prime number. In this section, we study the structure of semi-extraspecial $p$-groups with an automorphism of large order in detail and prove Theorem~\ref{Intro}. 

We begin by showing that $|G:G'|-1$ is the largest possible order of a $p'$-automorphism of a semi-extraspecial group $G$ and characterize the actions of automorphisms of this order. To this end, we first prove the following more general statement:

\begin{lemma}\label{lemma:gen transitive}
Let $G$ be a $p$-group with $|G:\Phi (G)| = p^{k}$ and let $\varphi$ be a $p'$-auto\-mor\-phism of $G$. Then $\ord(\varphi) \leq p^{k}-1$. Equality holds if and only if $\varphi$ permutes the nontrivial cosets of $\Phi (G)$ in $G$ transitively.
\end{lemma}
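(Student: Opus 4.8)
The plan is to reduce everything to the action of $\varphi$ on the elementary abelian quotient $V \coloneqq G/\Phi(G)$, which has order $p^k$. Since $\varphi$ normalizes $\Phi(G)$ (as $\Phi(G)$ is characteristic), it induces an automorphism $\bar\varphi$ of $V$, i.e.\ an element of $\GL_k(p)$. The crucial first observation is that $\ord(\varphi) = \ord(\bar\varphi)$ when $\varphi$ is a $p'$-automorphism: one inequality is clear, and for the other, if $\bar\varphi$ has order $m$ then $\varphi^m$ acts trivially on $G/\Phi(G)$, and a standard fact about coprime action (or about automorphisms acting trivially on the Frattini quotient of a $p$-group) forces $\varphi^m$ to be a $p$-element of $\Aut(G)$; since $\varphi$ has $p'$-order, $\varphi^m = 1$. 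Thus $\ord(\varphi)$ divides $|\GL_k(p)|$ and, more to the point, equals the order of an element of $\GL_k(p)$ whose order is prime to $p$.

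Next I would invoke the structure of $p'$-elements of $\GL_k(p)$: such an element is semisimple, so $V$ decomposes as a direct sum of $\F_p[\bar\varphi]$-submodules on which $\bar\varphi$ acts irreducibly, and on an irreducible $\F_p[\bar\varphi]$-module of $\F_p$-dimension $d$ the element $\bar\varphi$ has order dividing $p^d - 1$. Writing $V$ as a sum of irreducibles of dimensions $d_1, \dots, d_r$ with $\sum d_i = k$, we get
\[
\ord(\bar\varphi) = \operatorname{lcm}(\ord(\bar\varphi|_{V_i})) \leq \operatorname{lcm}(p^{d_1}-1, \dots, p^{d_r}-1) \leq \prod_i (p^{d_i}-1) \leq p^{\sum_i d_i} - 1 = p^k - 1,
\]
where the last inequality $\prod (p^{d_i}-1) \leq p^{\sum d_i}-1$ is elementary (expand the product; it is a sub-sum of the binomial-type expansion of $p^{\sum d_i}$, all of whose cross terms are positive). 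This establishes $\ord(\varphi) \leq p^k - 1$.

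For the equality case: if $\ord(\bar\varphi) = p^k - 1$, then chasing back through the inequalities forces $r = 1$, i.e.\ $V$ is an irreducible $\F_p[\bar\varphi]$-module of dimension $k$ on which $\bar\varphi$ is a Singer cycle. A Singer cycle generates a cyclic group of order $p^k - 1$ acting on $V \setminus \{0\}$, a set of size $p^k - 1$, with trivial stabilizer (a nonzero fixed vector of a power would give a proper nonzero submodule, contradicting irreducibility combined with the order being exactly $p^k-1$) — hence the action is transitive (indeed regular) on the $p^k - 1$ nonzero vectors, which are exactly the nontrivial cosets of $\Phi(G)$. Conversely, if $\varphi$ (equivalently $\bar\varphi$) permutes the $p^k - 1$ nontrivial cosets transitively, then $p^k - 1$ divides $\ord(\varphi)$, and combined with the upper bound just proved we get $\ord(\varphi) = p^k - 1$. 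The main obstacle is getting the reduction $\ord(\varphi) = \ord(\bar\varphi)$ cleanly stated with the right citation for the coprime-action fact; everything after that is standard linear algebra over finite fields, and I would keep the irreducible-decomposition argument brief since it is well known.
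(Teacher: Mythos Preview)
Your proof is correct and follows essentially the same route as the paper: reduce to the induced automorphism $\bar\varphi$ on $G/\Phi(G)$, use the Burnside/coprime-action fact that a $p'$-automorphism acting trivially on the Frattini quotient is trivial to get $\ord(\varphi)=\ord(\bar\varphi)$, and then invoke the Singer-cycle bound in $\GL_k(p)$. The only difference is that the paper simply cites $\max(\GL_k(p))=p^k-1$ and the transitivity of Singer cycles as well-known, whereas you reprove this via the semisimple decomposition and the inequality $\prod_i(p^{d_i}-1)\le p^{\sum d_i}-1$; this extra detail is fine but could be compressed to a citation.
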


\begin{proof}
Note that $\varphi$ induces an automorphism~$\varphi'$ of the elementary abelian group $G/\Phi (G)$.  Let $d \coloneqq \ord(\varphi')$, so $\varphi^d$ acts trivially on $G/\Phi (G)$. By Burnside's theorem (see \cite[Theorem 5.1.4]{GOR68}), we have $\varphi^d = 1$ and hence, $d = \ord(\varphi)$ follows.  The automorphism group of $G/ \Phi (G)$ is isomorphic to $\GL_{k}(p)$.  Hence, we obtain $\ord(\varphi) \leq \max(\GL_{k}(p)) = p^{k}-1$. Moreover, equality holds if and only if $\varphi'$ is a Singer cycle, and therefore, permutes the nontrivial elements in $G/\Phi (G)$ transitively. 
\end{proof}

We apply this lemma to  S.E.S. groups.

\begin{corollary}\label{lemma:transitive}
Let $G$ be a semi-extraspecial $p$-group with $|G:Z(G)| = p^{2a}$. For every $p'$-automorphism $\varphi$ of $G$, we have $\ord(\varphi) \leq p^{2a}-1$. Moreover $\ord(\varphi) = p^{2a}-1$ if and only if $\varphi$ permutes the nontrivial cosets of $Z(G)$ in~$G$ transitively. 
\end{corollary}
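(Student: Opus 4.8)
The plan is to deduce Corollary~\ref{lemma:transitive} directly from Lemma~\ref{lemma:gen transitive}, so the only real work is the translation of hypotheses. For a semi-extraspecial $p$-group $G$, part (1) of Lemma~\ref{lemma:ses} gives $\Phi(G) = Z(G)$, hence the hypothesis $|G : Z(G)| = p^{2a}$ is exactly the hypothesis $|G : \Phi(G)| = p^{k}$ of Lemma~\ref{lemma:gen transitive} with $k = 2a$. Applying that lemma verbatim yields $\ord(\varphi) \le p^{2a} - 1$ for every $p'$-automorphism $\varphi$ of $G$, together with the statement that equality holds if and only if $\varphi$ permutes the nontrivial cosets of $\Phi(G) = Z(G)$ in $G$ transitively.

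Thus I would write: since $G$ is semi-extraspecial, Lemma~\ref{lemma:ses}(1) gives $\Phi(G) = Z(G)$, and the claim is immediate from Lemma~\ref{lemma:gen transitive} applied with $k = 2a$. There is essentially no obstacle here; the corollary is a routine specialization. The one point worth a sentence is that $|G : Z(G)|$ is indeed an even power of $p$ (so writing it as $p^{2a}$ is legitimate), but this is already built into the statement and is in any case part (2) of Lemma~\ref{lemma:ses}, so it needs no further comment. The forward-looking remark is that this corollary sets up the dichotomy used in the sequel: an extremal $p'$-automorphism acts transitively on the $p^{2a} - 1$ nontrivial cosets of the center, which is the structural input that will be exploited to pin down $G$ as a Sylow $p$-subgroup of $\SU_3(p^{2a})$.
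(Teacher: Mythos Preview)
Your proposal is correct and matches the paper's own proof essentially verbatim: the paper simply notes that $Z(G) = \Phi(G)$ for a semi-extraspecial group and then invokes Lemma~\ref{lemma:gen transitive}. There is nothing to add.
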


\begin{proof}
When $G$ is an S.E.S. group, we have that $Z(G) = \Phi (G)$.  Then we can apply Lemma \ref{lemma:gen transitive} to obtain the conclusion.
\end{proof}

To illustrate the strength of this condition, we show that if $p$ is odd, the existence of an automorphism that permutes the nontrivial cosets of $Z(G)$ transitively forces the semi-extraspecial group to be of exponent $p$. Note that the analogous statement for $p = 2$ cannot hold as groups of exponent~2 are abelian. 

\begin{lemma}\label{exponent}
Assume $p > 2$ and let $G$ be a semi-extraspecial $p$-group. If $G$ has an automorphism that acts transitively on the nontrivial cosets of $G/Z(G)$, then~$G$ is of exponent~$p$. 
\end{lemma}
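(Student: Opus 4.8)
The plan is to use the fact that for odd $p$ a group of nilpotence class~$2$ has the $p$-th power map as a homomorphism, together with the inequality $b \le a$ from Lemma~\ref{lemma:ses}.

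First I would observe that since $G$ has class~$2$ (Lemma~\ref{lemma:ses}(1)) and $p$ is odd, the map $P\colon G \to G$, $g \mapsto g^p$, is a group homomorphism: for $x, y \in G$ we have $(xy)^p = x^p y^p [y,x]^{\binom p 2}$, the commutator $[y,x]$ lies in the elementary abelian group $G' = Z(G)$, and $\binom p 2 = \tfrac{p(p-1)}{2}$ is divisible by~$p$, so the last factor is trivial. The image $\mho(G)$ of $P$ lies in $\Phi(G) = Z(G)$, and $Z(G) \subseteq \ker P$ since $Z(G)$ has exponent~$p$; hence $P$ induces a homomorphism $\overline P \colon G/Z(G) \to Z(G)$ sending $gZ(G)$ to $g^p$.

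Next I would note that $\overline P$ cannot be injective: by Lemma~\ref{lemma:ses}(2),(3) we have $|G/Z(G)| = p^{2a}$ while $|Z(G)| = p^b$ with $b \le a$, and $a \ge 1$ since a semi-extraspecial group is nonabelian, so $|G/Z(G)| = p^{2a} > p^a \ge |Z(G)| \ge |\mho(G)|$. Therefore $K \coloneqq \ker \overline P$ is a nontrivial subgroup of $G/Z(G)$, consisting precisely of the cosets $gZ(G)$ with $g^p = 1$.

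Finally, let $\varphi$ be an automorphism acting transitively on the nontrivial cosets of $Z(G)$ in~$G$, and let $\varphi'$ be the induced automorphism of $G/Z(G)$. Since $\varphi$ commutes with $P$ (as $\varphi(g)^p = \varphi(g^p)$), it preserves $K$, so $K \setminus \{Z(G)\}$ is a nonempty $\varphi'$-invariant subset of the set of nontrivial cosets; by transitivity it must be the whole set, forcing $K = G/Z(G)$. Hence $g^p = 1$ for every $g \in G$, i.e.\ $G$ has exponent~$p$. I do not expect a serious obstacle here; the only points requiring attention are the verification that $P$ is a homomorphism factoring through $G/Z(G)$ and the use of $b \le a$ to guarantee that $\overline P$ has nontrivial kernel.
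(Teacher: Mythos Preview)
Your proof is correct and follows essentially the same route as the paper's: both exploit that for odd $p$ the $p$-th power map induces a homomorphism $G/Z(G)\to Z(G)$, use $b\le a$ to see its kernel is nontrivial, and then invoke transitivity (equivalently, that $\Omega$ is characteristic) to force the kernel to be all of $G/Z(G)$. The only cosmetic difference is that the paper argues by contradiction via the characteristic subgroup $\Omega$ lying strictly between $Z(G)$ and $G$, whereas you phrase it directly in terms of the automorphism preserving $\ker\overline P$.
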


\begin{proof}
Since $G$ has nilpotence class $2$, $G/Z(G)$ and $Z(G)$ have exponent $p$ and $p$ is odd, the map $G/Z(G) \to Z(G)$ defined by $xZ(G) \rightarrow x^p$ is a homomorphism with kernel $\Omega /Z(G)$ and image $\mho$ (see \cite[Hilfssatz III.1.3]{HUP67}). By the First Isomorphism theorem, $G/\Omega$ is isomorphic to $\mho$, so $|G:\Omega| = |\mho|$.  As normal, set $|G:Z(G)| = p^{2a}$ and $|Z(G)| = p^b$.  If $G$ has exponent $p^2$, then $1 < |\mho| \leq p^b$. This implies that $1 < |G/\Omega| \leq p^b$.  Hence, $|\Omega/Z(G)| \ge p^{2a - b} \geq p^a > 1$.  It follows that $G' = Z(G) < \Omega < G$.  On the other hand, $\Omega$ is characteristic in $G$. This contradicts $G$ having an automorphism that acts transitively on the nontrivial elements in $G/Z(G) = G/G'$.  Therefore, $G$ must have exponent $p$.
\end{proof}

Now we turn to the proof of Theorem~\ref{Intro}. To this end, we first refine the results on orders of $p'$-automorphisms from Corollary~\ref{lemma:transitive}. 
As is standard notation, when~$\delta$ is an automorphism of $G$, then we write $\delta_{Z(G)}$ for the restriction of $\delta$ to $Z(G)$.

\begin{lemma}\label{max auto}
Let $G$ be a semi-extraspecial $p$-group with $|G:Z(G)| = p^{2a}$ and $|Z(G)| = p^b$. Let $\delta$ be a $p'$-automorphism of $G$ and set $d \coloneqq \ord(\delta_{Z(G)})$. Then
\begin{enumerate}
\item $d \leq p^b-1$,
\item $\delta^d$ induces an automorphism of $G/Z(G)$ whose order $c$ is the order of a $p'$-element of $\SP_{2a} (p)$,
\item $\ord(\delta) = cd$, where $c$ is as in (2).
\end{enumerate}
\end{lemma}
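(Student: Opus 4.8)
The plan is to analyze how $\delta$ interacts with the decomposition of $G$ into the normal subgroup $Z(G)$ and the quotient $G/Z(G)$, using the bilinear form from Lemma~\ref{lemma:ses}(4) to link the two. For part (1), the restriction $\delta_{Z(G)}$ is a $p'$-automorphism of the elementary abelian group $Z(G)$ of order $p^b$; since $\Aut(Z(G)) \cong \GL_b(p)$ and $\max(\GL_b(p)) = p^b - 1$, we immediately get $d \le p^b - 1$. For part (2), observe that $\delta^d$ acts trivially on $Z(G)$ by definition of $d$. It therefore induces an automorphism $\overline{\delta^d}$ of $G/Z(G)$, and I want to show this automorphism lies in $\SP_{2a}(p)$ with respect to the symplectic form. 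The key point is that $G/Z(G)$ carries the alternating form $\langle\,\cdot\,,\cdot\,\rangle_\phi$ valued in $\F_p$ (after identifying $\Irr(G)$-values appropriately, or by working with a fixed nontrivial $\phi$); because $\delta^d$ fixes $Z(G)$ pointwise, it fixes the commutator $[x,y]$ for representatives, hence preserves this form. So $\overline{\delta^d} \in \SP_{2a}(p)$, and since $\delta$ is a $p'$-element its image $\overline{\delta^d}$ is a $p'$-element of $\SP_{2a}(p)$; let $c$ be its order.

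For part (3), I need $\ord(\delta) = cd$. First, $(\delta^d)^c = \delta^{cd}$ acts trivially on both $Z(G)$ and $G/Z(G)$, hence acts trivially on $G/\Phi(G) = G/Z(G)$; by Burnside's basis theorem (as invoked in the proof of Lemma~\ref{lemma:gen transitive}), $\delta^{cd} = 1$, so $\ord(\delta) \mid cd$. Conversely, $\ord(\delta_{Z(G)}) = d$ divides $\ord(\delta)$, so write $\ord(\delta) = d e$; then $\delta^{de}$ is trivial, so $(\overline{\delta^d})^e = 1$, forcing $c \mid e$, hence $cd \mid \ord(\delta)$. Combining the two divisibilities gives $\ord(\delta) = cd$.

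The main subtlety I anticipate is part (2): making precise that $\overline{\delta^d}$ genuinely preserves a \emph{single} non-degenerate symplectic form over $\F_p$, rather than merely a family of forms indexed by characters of $Z(G)$. Lemma~\ref{lemma:ses}(4) gives a form for each nontrivial $\phi \in \Irr(G)$, but when $b > 1$ there are many such forms. The resolution is that $\delta^d$ fixes every element of $Z(G)$, hence fixes every character $\phi$ of $Z(G)$, so it preserves \emph{each} form $\langle\,\cdot\,,\cdot\,\rangle_\phi$ simultaneously; picking any one nontrivial $\phi$ whose form is non-degenerate (which exists since $G$ is semi-extraspecial, so $G/N$ is extraspecial and the induced form is non-degenerate for $\phi$ nontrivial on the corresponding quotient) places $\overline{\delta^d}$ in the corresponding copy of $\SP_{2a}(p)$. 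One should also note the harmless ambiguity that the symplectic group is only well-defined up to the choice of form, but all such groups are conjugate in $\GL_{2a}(p)$, so the statement "$c$ is the order of a $p'$-element of $\SP_{2a}(p)$" is unambiguous.
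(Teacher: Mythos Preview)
Your proposal is correct and follows essentially the same route as the paper: part~(1) via $\Aut(Z(G))\cong\GL_b(p)$; part~(2) by noting that $\delta^d$ fixes $Z(G)$ pointwise, hence fixes every character $\phi\in\Irr(Z(G))$ and therefore preserves each form $\langle\cdot,\cdot\rangle_\phi$, so the induced map on $G/Z(G)$ lies in $\SP_{2a}(p)$ for any fixed nontrivial $\phi$; part~(3) by combining $\ord(\delta)\mid cd$ (via Burnside, since $\delta^{cd}$ is a $p'$-automorphism acting trivially on $G/\Phi(G)$) with $cd\mid\ord(\delta)$. The paper packages the Burnside step slightly differently, absorbing it into part~(2) by asserting directly that $\ord(\overline{\delta^d})=\ord(\delta^d)$, and then dispatches part~(3) in one line; your divisibility argument for part~(3) is a bit more explicit but amounts to the same thing.
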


\begin{proof}
As $G' = Z(G)$ is characteristic, we have $\delta_{Z(G)} \in \Aut (Z(G))$. Since $Z(G)$ is elementary abelian of order $p^b$, we know that $\Aut (Z(G))$ is isomorphic to $\GL_b (p)$. Thus, $d \leq \max(\GL_b(q)) = p^b-1$.

We claim that $\delta^d$ induces an element $\delta'$ of $\SP_{2a} (p)$ of the same order. (For the details of this argument, see Section 4 of \cite{brown}.)  Note that $\delta^d$ induces an automorphism $\delta'$ of $G/Z(G)$.  Since $(\delta_{Z(G)})^d = 1$, we see that $\delta'$ has the same order as~$\delta^d$.  Also, $\delta^d$ stabilizes all of the irreducible characters of $G' = Z(G)$.  Hence, $\delta^d$ preserves the bilinear form $\langle \cdot, \cdot \rangle_\phi$ for every irreducible character $\phi \in \Irr(Z(G))$.  We now fix a nontrivial irreducible character $\phi \in \Irr ({Z(G)})$.  Having $\delta^d$ act on $G/Z(G)$ preserving $\langle \cdot, \cdot \rangle_\phi$, we see that $\delta'$ is an element of $\SP_{2a} (p)$. In particular, $c \coloneqq \ord(\delta') = \ord (\delta^d)$ is the order of a $p'$-element in $\SP_{2a}(p)$. 
We deduce that $\delta^{dc} = (\delta^d)^c = 1$.  Note that if $\ord (\delta) < cd$, then either $\delta_{Z(G)}$ has order less than $d$ or $\delta'$ has order less than $c$; therefore, $\ord (\delta) = cd$.
\end{proof}




We next consider the orders  of $p'$-elements in $\SP_{2a}(p)$. 
For integers $a, m > 1$, a  \emph{Zsigmondy prime divisor} of $a^m-1$ is a prime $l$ that divides $a^m-1$, but not $a^i-1$ for $i \in \{1, \dots, m-1\}$.   Note that since $a^{2m} - 1 = (a^m - 1)(a^m + 1)$, the Zsigmondy prime divisors of $a^{2m} - 1$ are all divisors of $a^m + 1$.


\begin{lemma}\label{max SP}
Let $G =\SP_{2a}(p)$. 
\begin{enumerate}
\item If $a = 1$ the maximal order of a $p'$-element of $G$ is $p + 1$.
\item If $(p,a) = (2,3)$ the maximal order of a $p'$-element ($2'$-element) in $G$ that
divides $p^{2a} - 1 = 63$ is $p^a + 1 = 9$. 
\item If $a > 1$ or if $(p,a) \neq (2,3)$, 
$k$ is the order of a $p'$-element in $G$, and $k$ is divisible by all Zsigmondy prime divisors of $p^{2a} - 1$, then $k$ divides $p^a + 1$.
\end{enumerate} 
\end{lemma}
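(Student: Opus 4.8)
The idea is to reduce everything to the known description of maximal orders of elements in $\GL_n(q)$ and $\SP_{2a}(p)$ via their actions on the natural module, using Zsigmondy's theorem to pin down the prime divisors. For part (1), $\SP_2(p) = \SL_2(p)$, whose $p'$-elements lie in tori of order $p-1$ or $p+1$, so the maximal $p'$-element order is $p+1$; this is elementary. For part (2), $\SP_6(2)$ has order $2^9 \cdot 3^4 \cdot 5 \cdot 7$; I would just cite the order structure (e.g. from the ATLAS or a direct argument on $63 = 7 \cdot 9$) to see that the largest divisor of $63$ that is an element order is $9 = p^a+1$, while $7$ and $21$ are not (the relevant fact being that an element of order $7$ in $\SP_6(2)$ is a Singer cycle in a $\GL_3(2) \le \SP_6(2)$ and cannot be enlarged by a factor of $3$ inside the symplectic group). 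These two small cases are routine and isolated.

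The substance is part (3). Let $x \in \SP_{2a}(p)$ be a $p'$-element of order $k$, acting on $V = \F_p^{2a}$ equipped with the symplectic form. First I would decompose $V$ under $\langle x \rangle$ into indecomposable (equivalently, since $x$ is semisimple, irreducible) $\F_p\langle x\rangle$-submodules, and recall the standard fact that for a semisimple symplectic transformation each such irreducible summand is either itself nondegenerate or paired with an isomorphic ``dual'' summand to form a nondegenerate hyperbolic piece; in either configuration the $\F_p[x]$-module structure on a summand is a field extension $\F_{p^e}$ for various $e$, and the order of $x$ on that summand divides $p^e - 1$. Thus $k = \operatorname{lcm}$ of numbers each dividing some $p^e - 1$ with $\sum e \le 2a$ (with the constraint that a ``nondegenerate irreducible'' summand forces $e$ even and the order actually divides $p^{e/2}+1$, a point I'll need to be careful about). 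Now bring in the hypothesis: $k$ is divisible by every Zsigmondy prime divisor $\ell$ of $p^{2a}-1$. Since (by the remark in the excerpt) such $\ell$ divides $p^a+1$ but no $p^i-1$ for $i < 2a$, the order of $\ell$ modulo $p$ is exactly $2a$; hence $\ell \mid p^e - 1$ forces $2a \mid e$, so $\ell$ can only appear on a summand with $e \ge 2a$, i.e. on a summand that is all of $V$. Therefore $\langle x\rangle$ acts irreducibly on $V$; being $2a$-dimensional and irreducible, and this is the ``nondegenerate irreducible'' symplectic case, so $k \mid p^a + 1$. The one place I'd slow down is checking that Zsigmondy primes actually \emph{exist} for $p^{2a}-1$ under the hypotheses of (3) — this is exactly the content of Zsigmondy's theorem, whose only exceptions are $(p^{2a}) = (2^6)$ and $(\text{base, exponent}) = (2,1)$-type degeneracies, matching the excluded case $(p,a)=(2,3)$ and the $a=1$ clause; and secondly, making the ``nondegenerate irreducible symplectic module of dimension $2a$ has $x$-order dividing $p^a+1$'' step precise (the irreducible $\F_p\langle x\rangle$-module $V \cong \F_{p^{2a}}$ carries, up to scalar, a unique symplectic form, realized via the trace form twisted by the order-$2$ field automorphism $t \mapsto t^{p^a}$, and the scalars $t \in \F_{p^{2a}}^\times$ preserving it are precisely those with $t \cdot t^{p^a} = 1$, i.e. the norm-one subgroup of order $p^a+1$).

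So the key steps, in order, are: (i) dispose of $a=1$ and $(p,a)=(2,3)$ by hand; (ii) for the remaining cases invoke Zsigmondy to produce a prime $\ell \mid p^{2a}-1$ of multiplicative order exactly $2a$ mod $p$; (iii) decompose the natural module under the semisimple element $x$ and observe $\ell$ can only be supported on a $2a$-dimensional summand, forcing irreducibility; (iv) identify the symplectic structure on this irreducible module with the norm-type form on $\F_{p^{2a}}$ and conclude $k \mid p^a+1$. The main obstacle is step (iii)--(iv): getting the module-theoretic bookkeeping for symplectic semisimple elements exactly right, including the dual-pairing phenomenon and the precise order bound on a self-dual irreducible summand. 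Much of this is standard and likely already recorded in Section~4 of \cite{brown}, which I would cite, but I would want to state the decomposition cleanly since it is doing the real work.
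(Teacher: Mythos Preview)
Your proposal is correct and follows the same underlying idea as the paper---use Zsigmondy primes together with the structure of semisimple elements in $\SP_{2a}(p)$---but the execution differs in one respect worth noting. The paper does not carry out the module decomposition by hand; instead it quotes the description of maximal tori of $\SP_{2a}(p)$ from \cite{KAN}: every semisimple element lies in a torus of the form $\prod_i Z_i$ with $Z_i$ cyclic of order $p^{j_i}\pm 1$ and $\sum_i j_i = a$. The symplectic constraint (the $\pm 1$ dichotomy, with partition of $a$ rather than $2a$) is thus already packaged into the cited result, and a single Zsigmondy argument forces one factor to be $p^a+1$ and all others to be trivial. Your route instead first decomposes $V$ as an $\F_p\langle x\rangle$-module with dimensions summing to $2a$, uses the Zsigmondy prime to force irreducibility, and only then invokes the symplectic structure on the irreducible module to obtain the bound $p^a+1$; this is more self-contained but requires you to redo the ``nondegenerate irreducible summand has order dividing $p^{e/2}+1$'' step that the torus description gives for free. (Your sketch of that step via the norm-one subgroup of $\F_{p^{2a}}^\times$ is essentially right, though the precise trace form you wrote is Hermitian rather than alternating; the correct alternating form comes from restricting scalars of a $2$-dimensional symplectic $\F_{p^a}$-form on $\F_{p^{2a}}$, and multiplication by $t$ preserves it iff $N_{\F_{p^{2a}}/\F_{p^a}}(t)=1$.) For part~(2) the paper simply lists the odd element orders of $\SP_6(2)$ by machine computation rather than arguing structurally.
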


\begin{proof}
When $a = 1$, we know that $\SP_2 (p) \cong \SL_2 (p)$ and by Dickson's Classification of the subgroups of ${\rm PSL}_2 (q)$, the maximal order of a $p'$-element is $p+1$. If $(p,a) = (2,3)$, we can compute the orders of the $2'$-elements of $\SP_6 (2)$ in Magma (or GAP). These are $\{ 1, 3, 5, 7, 9, 15 \}$. Thus, $9$ is the maximal $2'$-order which divides $63$, as required.

Now let $a> 1$ and $(p,a) \neq (2,3)$, so the Zsigmondy prime theorem applies. Since $p^{2a} - 1 = (p^a -1) (p^a+1)$, we see that $p^a + 1$ is divisible by the Zsigmondy prime divisors of $p^{2a} - 1$.  Suppose conversely that $k$ is the order of a $p'$-element of $G$, and $k$ is divisible by the Zsigmondy prime divisors of $p^{2a} -1$.   The $p'$-elements of $G$ are semi-simple and thus lie in maximal tori of $G$. These are of the form $\prod_{i=1}^l Z_i$ for $l \in \N$, where $Z_i$ is a cyclic group of order $p^{j_i} \pm 1$, and $j_1 + \dots + j_l = a$ (see, for instance, \cite[Section~2]{KAN}).
In particular, $k$ divides ${\rm lcm}_{i=1}^n (p^{u_i} - 1) \,{\rm lcm}_{j=1}^m (p^{v_j} + 1)$ for some $u_1, \dots, u_n, v_1, \dots, v_m, n,m \in \Z_{\geq 0}$ with $\sum_{i=1}^n u_i + \sum_{j=1}^m v_j \le a$. Let $q$ be a Zsigmondy prime divisor of $p^{2a} - 1$.  Then $q$ must divide some $p^{u_i} - 1$ or $p^{v_j} + 1$.  Due to $u_i \leq a$, the first case cannot occur, so $q$ divides $p^{v_j} + 1$ for some~$j$.  If $v_j < a$, then $q$ divides $p^{2v_j} - 1 < p^{2a} - 1$, which is a contradiction.  Hence, we deduce that $v_j = a$.  Since $\sum_{i=1}^n u_i + \sum_{j=1}^m v_j \leq a$, this implies that $k$ divides $p^a + 1$. 
\end{proof}

We now show that when $G$ has the given automorphism of order $|G:Z(G)| - 1 = |G:G'| - 1$, then $G$ is ultraspecial, and we can determine the order of the restriction of the automorphism to the center.  

\begin{corollary} \label{ultraspecial}
Let $G$ be a semi-extraspecial $p$-group with $|G:Z(G)| = p^{2a}$ and $|Z(G)| = p^b$ having an automorphism $\sigma$ of order $p^{2a} - 1$.  Then the following hold:
\begin{enumerate}
\item $G$ is ultraspecial.
\item $\sigma_{Z(G)}$ has order $|Z(G)| - 1 = p^a - 1$.
\item $\sigma^{p^a - 1}$ induces an automorphism of $G/Z(G)$ having order $p^a + 1$.
\end{enumerate}
\end{corollary}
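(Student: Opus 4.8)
Write $\sigma$ for the automorphism of order $p^{2a}-1$, and set $d := \ord(\sigma_{Z(G)})$ and $c := \ord(\sigma^d \text{ on } G/Z(G))$ as in Lemma~\ref{max auto}, so that $\ord(\sigma) = cd = p^{2a}-1$. The strategy is to pin down $c$ and $d$ numerically, forcing $b = a$ along the way. By Corollary~\ref{lemma:transitive}, $\sigma$ acts transitively on the $p^{2a}-1$ nontrivial cosets of $Z(G)$ in $G$; this transitivity is the key input, because it forces the full multiplicative structure of $p^{2a}-1$ to appear in $\ord(\sigma)$ and hence to be distributed between $c$ and $d$.

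The first step: show that $d = p^b - 1$. By Lemma~\ref{max auto}(1) we have $d \mid p^b-1$, and $d \le p^b-1$. For the reverse divisibility, I would argue that $\sigma_{Z(G)}$ must act transitively (equivalently, as a Singer cycle) on the nontrivial elements of $Z(G)$: the commutator map sends the $\sigma$-orbit on $(G/Z(G)) \setminus \{0\}$ onto a spanning set of $Z(G)$, and transitivity of $\sigma$ on cosets, combined with $G' = Z(G)$, propagates to force $\langle \sigma_{Z(G)} \rangle$ to act with a single nontrivial orbit on $Z(G)$ — more concretely, for any nontrivial $z \in Z(G)$, writing $z = [x,y]$ and using that some power of $\sigma$ carries $x$ to any prescribed coset, one sees the orbit of $z$ under $\langle \sigma_{Z(G)} \rangle$ has size $p^b - 1$. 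Hence $d = p^b - 1$. This is the step I expect to require the most care: making the passage from "transitive on cosets of $Z(G)$" to "transitive on $Z(G)\setminus\{1\}$" fully rigorous, since one must control how the commutator form interacts with the $\sigma$-action and rule out that the orbit on $Z(G)$ is a proper subgroup's worth of elements.

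The second step is purely arithmetic. From $cd = p^{2a}-1 = (p^a-1)(p^a+1)$ and $d = p^b-1$ with $b \le a$, we get $c = \frac{(p^a-1)(p^a+1)}{p^b-1}$. Now $c$ must be the order of a $p'$-element of $\SP_{2a}(p)$ by Lemma~\ref{max auto}(2). Apply Lemma~\ref{max SP}: since $\ord(\sigma) = p^{2a}-1$ is divisible by every Zsigmondy prime divisor of $p^{2a}-1$ and $d = p^b-1 \mid p^a-1$ is coprime to all of them, $c$ is divisible by all Zsigmondy prime divisors of $p^{2a}-1$. If $a > 1$ and $(p,a) \ne (2,3)$, Lemma~\ref{max SP}(3) gives $c \mid p^a+1$, whence $(p^a-1) \mid (p^b-1)$, forcing $b \ge a$, hence $b = a$ and then $c = p^a+1$, $d = p^a - 1$. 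The remaining small cases: for $a = 1$, Lemma~\ref{max SP}(1) gives $c \le p+1$, and $c = \frac{(p-1)(p+1)}{p^b-1}$ with $b \le 1$ forces $b = 1$, $c = p+1$; for $(p,a) = (2,3)$, the relevant $2'$-order $c$ dividing $63$ is at most $9 = p^a+1$ by Lemma~\ref{max SP}(2), and $c = \frac{63}{2^b - 1}$ with $b \le 3$ forces $b = 3$, $c = 9$. In all cases $b = a$ (proving (1)), $\ord(\sigma_{Z(G)}) = p^a - 1 = |Z(G)| - 1$ (proving (2)), and $\sigma^{p^a-1}$ induces an automorphism of $G/Z(G)$ of order $p^a + 1$ (proving (3)).
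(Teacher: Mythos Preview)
Your second step --- the arithmetic with Zsigmondy primes and Lemma~\ref{max SP} --- is the heart of the matter and essentially matches the paper's argument. The problem is your first step.

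You try to establish $d = p^b - 1$ directly by arguing that $\sigma_{Z(G)}$ acts transitively on $Z(G)\setminus\{1\}$. The commutator sketch you give does not achieve this: knowing that $\sigma^k(x)$ ranges over all nontrivial cosets says nothing about which elements of $Z(G)$ the values $[\sigma^k(x), \sigma^k(y)]$ hit, since $\sigma^k(y)$ moves in lockstep with $\sigma^k(x)$ and cannot be chosen independently. You flag this yourself as the delicate point, and rightly so --- but in fact the step is simply unnecessary. The paper bypasses it: one uses only the \emph{inequality} $d \le p^b - 1 \le p^a - 1$ from Lemma~\ref{max auto}(1), which already gives $c = (p^{2a}-1)/d \ge p^a + 1$. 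For the reverse bound, since $d$ is the order of a $p'$-element of $\GL_b(p)$ it divides $\prod_{j=1}^{b}(p^j - 1)$; as $b \le a < 2a$, no Zsigmondy prime of $p^{2a}-1$ divides this product, so all such primes divide $c$, and Lemma~\ref{max SP} forces $c \le p^a + 1$ (the small cases $a=1$ and $(p,a)=(2,3)$ handled separately, exactly as you do). The two bounds pin $c = p^a + 1$, hence $d = p^a - 1$, hence $a = b$, and the transitivity on $Z(G)$ that you were trying to prove drops out as a \emph{consequence} rather than being needed as an input.

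A related minor slip: in your second step you assert ``$d = p^b - 1 \mid p^a - 1$'', which would need $b \mid a$ and is unproved. What you actually use there --- that $d$ is coprime to the Zsigmondy primes of $p^{2a}-1$ --- follows from $b < 2a$ (or from $d \mid \prod_{j\le b}(p^j-1)$ as above) without ever knowing $d = p^b - 1$.
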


\begin{proof}
Write $|G:Z(G)| = p^{2a}$ and $|Z(G)| = p^b$ for some $a,b \in \N$. Proving that $G$ is ultraspecial is equivalent to showing $a = b$.  Let $d$ be the order of $\sigma_{Z(G)}$ and let $c$ be the order of the automorphism of $G/Z(G)$ induced by $\sigma^d$.  By Lemma \ref{max auto} (3), we have $p^{2a} - 1 = \ord (\sigma) = cd$.  We know by Lemma \ref{max auto} (1) that $d \le p^b - 1 \le p^a -1 $.  Notice that $c = (p^{2a}- 1)/d \ge (p^{2a} - 1)/(p^b - 1) \ge (p^{2a} - 1)/(p^a - 1) = p^a + 1$. Also, $c$ is the order of a $p'$-element of $\SP_{2a}(p)$ by Lemma \ref{max auto}(2).  When $a = 1$ or $(p,a) = (2,3)$, it follows that $c = p^a + 1$ by Lemma \ref{max SP}. 

Now assume $a > 1$ and $(p,a) \neq (2,3)$.   As~$d$ is the order of a $p'$-element of $\GL_b (p)$, it follows that $d$ divides $\Pi_{j=1}^{b} (p^{j}-1)$ since $|\GL_b(p)| = \Pi_{i=0}^{b-1} (p^b - p^i)$.  Thus, no Zsigmondy prime of $p^{2a} - 1$ divides $d$, and so, these primes must divide $c$. Hence, $c$ divides $p^a + 1$ by Lemma \ref{max SP}.  This implies that $c \le p^a + 1$.  In the previous paragraph, we showed that $p^a + 1 \le c$.   
Thus, in all cases $c = p^a +1$.  It follows that $d = p^a -1$, so $a=b$. 
Conclusions (2) and (3) follow.
\end{proof}

For every semi-extraspecial $p$-group $G$, every automorphism of $G$ induces a linear map on the elementary abelian group $G/G' = G/Z(G)$.  We say that $\alpha \in \Aut(G)$ acts \emph{irreducibly} on $G/Z(G)$ if $\alpha$ does not preserve any nontrivial $\F_p$-subspace of~$G/Z(G)$ (that is, there is no subspace $0 \lneq W \lneq G/Z(G)$ with $\alpha(W) \subseteq W$). 
The key ingredient of the proof of Theorem~\ref{Intro} is the following result:

\begin{theorem}[{\cite[Theorem~2]{BEI79}}]\label{theorem:beisiegel}
Let $\psi$ be a $p'$-automorphism of the non-abelian special finite $p$-group $G$ acting irreducibly on $G/G' = G/Z(G)$ and trivially on $G' = Z(G)$. Set $$m \coloneqq \min \{k \in \mathbb{N} \colon \ord(\psi) \mid (p^k-1)\}.$$ Then $G$ is isomorphic to $Q/Z$, where $Q$ is a Sylow $p$-subgroup of $\SU_3(p^{m})$ and $Z$ is a subgroup of $Z(Q)$.
\end{theorem}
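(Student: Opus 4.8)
The plan is to recode the hypotheses as linear algebra over a finite field and then recognize the resulting data as that of a Sylow $p$-subgroup of a three-dimensional unitary group. Write $Z \coloneqq Z(G) = G' = \Phi(G)$, put $V \coloneqq G/Z$, and set $n \coloneqq \ord(\psi)$. Since $\psi$ is a $p'$-automorphism acting trivially on $\Phi(G) = Z$, the argument in the proof of Lemma~\ref{lemma:gen transitive} (via Burnside's theorem) shows that $\psi$ acts faithfully on $V = G/\Phi(G)$; as it acts irreducibly and $\langle\psi\rangle$ is cyclic, $V$ is a faithful irreducible $\F_p\langle\psi\rangle$-module. Hence $V \cong K \coloneqq \F_{p^m}$ with $\psi$ acting as multiplication by some $\zeta \in K^{\times}$ of order $n$ satisfying $\F_p(\zeta) = K$; in particular $m$ is forced to equal $\min\{k \colon n \mid p^k - 1\}$, in accordance with the definition of $m$.

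Next I would encode the group structure. As $G$ is special of class $2$, the commutator induces an alternating $\F_p$-bilinear map $c \colon V \times V \to Z$ whose values generate $Z$ and which is nondegenerate (because $Z(G)$ is exactly the centre of $G$), and $\psi$ being trivial on $Z$ gives $c(\zeta u, \zeta v) = c(u,v)$ for all $u, v \in K$. Composing $c$ with a nonzero functional $\lambda \in Z^{*}$ produces a nonzero $\zeta$-invariant alternating $\F_p$-bilinear form on $K$, and $\lambda \mapsto \lambda \circ c$ is injective. The technical heart of the argument is the classification of $\zeta$-invariant $\F_p$-bilinear forms on $K$: representing such a form via the nondegenerate trace pairing as $b(u,v) = \mathrm{Tr}_{K/\F_p}(u \cdot g(v))$ with $g$ an $\F_p$-linear endomorphism of $K$, the $\zeta$-invariance becomes $g(\zeta v) = \zeta^{-1} g(v)$; writing $g$ as a linearized polynomial $g(v) = \sum_{i=0}^{m-1} a_i v^{p^i}$ then forces $a_i = 0$ unless $n \mid p^i + 1$. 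Since $m$ is the multiplicative order of $p$ modulo $n$ and $0 \le i < m$, the only surviving index is $i = m/2$ (the case $i = 0$ being incompatible with $G$ non-abelian); hence $m$ is even, say $m = 2s$, one has $n \mid p^s + 1$, and every $\zeta$-invariant alternating form equals $b_a(u,v) = \mathrm{Tr}_{K/\F_p}(a\,u\,v^{p^s})$ for some $a$ in the $s$-dimensional $\F_p$-space $A_0 \coloneqq \{a \in K \colon a^{p^s} = -a\}$, the alternating condition singling out exactly this $A_0$. In particular $\dim_{\F_p} Z \le s = m/2$, and $c$ is, after the identification $Z^{*} \hookrightarrow A_0$, the standard ``$\SU_3$-commutator'' $(u,v) \mapsto u^{p^s} v - v^{p^s} u$ followed by a linear projection.

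It remains to see that $G$ is determined up to isomorphism by $(V,c)$ and to realize it as a central quotient of a Sylow $p$-subgroup of $\SU_3(p^m)$ (now with $m = 2s$). For odd $p$ the $p$-th power map $V \to Z$, $xZ \mapsto x^p$, is $\F_p$-linear and $\zeta$-invariant, hence zero, since a $\zeta$-invariant $\F_p$-functional on $K$ vanishes ($\zeta - 1$ is invertible); thus $G$ has exponent $p$ and is determined by $(V,c)$. For $p = 2$ the squaring map $xZ \mapsto x^2$ is a quadratic map polarizing to $c$, so two possible choices differ by a $\zeta$-invariant $\F_2$-linear map $V \to Z$, which again vanishes, and $G$ is still determined by $(V,c)$. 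Finally one takes the Sylow $p$-subgroup $Q$ of $\SU_3(p^m)$ as the group of upper unitriangular unitary $3 \times 3$ matrices: it is ultraspecial with $Q/Z(Q) \cong K$, $Z(Q) \cong A_0$, commutator $(u,v) \mapsto u^{p^s} v - v^{p^s} u$, and it admits an automorphism acting on $Q/Z(Q)$ as multiplication by $\zeta$ and trivially on $Z(Q)$ --- well defined precisely because $n \mid p^s + 1$. Comparing the subspace $\operatorname{Im}(Z^{*} \hookrightarrow A_0)$ with the one realized by a central quotient $Q/Z_0$ ($Z_0 \le Z(Q)$) then shows $G \cong Q/Z_0$.

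The main obstacle is the classification of $\zeta$-invariant alternating forms and its arithmetic consequences: this is the step that produces the number $3$ and the group $\SU_3$, via the linearized-polynomial identity together with the fact that the multiplicative order of $p$ modulo $n$ divides $2i$ whenever $n \mid p^i + 1$, forcing $i = m/2$ when $i < m$. Close behind are the characteristic-$2$ bookkeeping needed to recover the squaring map from $c$ and the final matching of commutator forms, which must genuinely present $G$ as a quotient of the unitary Sylow subgroup rather than merely as an abstract group with the same numerical invariants.
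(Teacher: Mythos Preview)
The paper does not contain its own proof of this statement: Theorem~\ref{theorem:beisiegel} is quoted from Beisiegel~\cite{BEI79} and invoked as a black box in the proof of Theorem~\ref{theo:classification}. There is therefore nothing in the present paper to compare your proposal against.

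That said, your outline is a correct and essentially standard route to Beisiegel's result. The identification of $V=G/Z(G)$ with $K=\F_{p^m}$ via the irreducible cyclic action, the trace/linearized-polynomial classification of $\zeta$-invariant bilinear forms on $K$ (yielding $m$ even and $n\mid p^{m/2}+1$), and the vanishing of any $\zeta$-invariant $\F_p$-linear map $K\to Z$ (whence exponent $p$ for odd $p$, and uniqueness of the squaring refinement for $p=2$) are exactly the right ingredients. The one place where your sketch is thin is the final matching: you should state explicitly that a special $p$-group of class~$2$ is determined up to isomorphism by the data $(V,Z,c)$ (together with the squaring map when $p=2$), and then observe that for the given embedding $Z^{\ast}\hookrightarrow A_0$ one can choose $Z_0\le Z(Q)$ so that $Q/Z_0$ realises precisely this data; since $\psi$ acts trivially on $Z(Q)$ it preserves every such $Z_0$, so $Q/Z_0$ again carries a $\zeta$-action and the uniqueness argument for the squaring map in characteristic~$2$ goes through. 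None of this is difficult, but it deserves one or two sentences rather than the single clause you give it.
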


We can now prove the first implication of Theorem~\ref{Intro}. 

\begin{theorem}\label{theo:classification}
Let $G$ be a semi-extraspecial $p$-group and suppose that~$G$ has an automorphism of order $|G:G'|-1$. Then~$G$ is isomorphic to a Sylow $p$-subgroup of~$\SU_3(p^{2a})$. 
\end{theorem}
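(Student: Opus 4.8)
The plan is to deduce the theorem from Beisiegel's Theorem~\ref{theorem:beisiegel}, applied not to the given automorphism itself but to a suitable power of it. Write $|G:Z(G)| = p^{2a}$ and let $\sigma \in \Aut(G)$ have order $p^{2a}-1$. First I would invoke Corollary~\ref{ultraspecial}: $G$ is ultraspecial, so $|Z(G)| = p^a$ and $|G| = p^{3a}$; the restriction $\sigma_{Z(G)}$ has order $p^a-1$; and $\psi \coloneqq \sigma^{p^a-1}$ induces on $G/Z(G)$ an automorphism of order $p^a+1$. It then remains to verify the hypotheses of Theorem~\ref{theorem:beisiegel} for $\psi$. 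Since $p \nmid p^a+1$, the map $\psi$ is a $p'$-automorphism of $G$; since $(\sigma_{Z(G)})^{p^a-1}=1$, it acts trivially on $G' = Z(G)$; and $G$ is a non-abelian special $p$-group because it is semi-extraspecial. The only substantial point left is that $\psi$ acts \emph{irreducibly} on $G/G' = G/Z(G)$.

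For the irreducibility I would proceed as follows. By Corollary~\ref{lemma:transitive}, $\sigma$ permutes the $p^{2a}-1$ nontrivial cosets of $Z(G)$ transitively, so the automorphism $\bar\sigma$ it induces on the $\F_p$-space $V \coloneqq G/Z(G)$ generates a cyclic group of order $p^{2a}-1$ acting regularly on $V \setminus \{0\}$; that is, $\bar\sigma$ is a Singer cycle of $\GL_{2a}(p)$. Identifying $\langle\bar\sigma\rangle$ with $\F_{p^{2a}}^{\times}$ acting by multiplication on $V \cong \F_{p^{2a}}$, the subgroup $\langle(\bar\sigma)^{p^a-1}\rangle$ has order $(p^{2a}-1)/(p^a-1) = p^a+1$. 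A subgroup $H$ of this Singer cycle acts irreducibly on $V$ exactly when the $\F_p$-subalgebra it generates is all of $\F_{p^{2a}}$, equivalently when $|H|$ fails to divide $p^d-1$ for every proper divisor $d$ of $2a$. Every proper divisor $d$ of $2a$ satisfies $d \le a$, so $p^d-1 < p^a+1 = |H|$; hence no such $d$ exists and $\psi$ acts irreducibly.

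With all hypotheses in place, Theorem~\ref{theorem:beisiegel} yields $G \cong Q/Z$, where $Q$ is a Sylow $p$-subgroup of $\SU_3(p^m)$, $Z \le Z(Q)$, and $m = \min\{k \in \N : (p^a+1)\mid(p^k-1)\}$. The same observation about divisors of $2a$ shows $m = 2a$: since $p^{2a}\equiv 1 \pmod{p^a+1}$ we have $m \mid 2a$, while for any proper divisor $d$ of $2a$ we get $p^d-1 < p^a+1$, which rules out $d$. Finally, a Sylow $p$-subgroup of $\SU_3(p^{2a})$ has order $p^{3a}$, which equals $|G|$; hence $G \cong Q/Z$ forces $|Z|=1$, i.e.\ $G \cong Q$, a Sylow $p$-subgroup of $\SU_3(p^{2a})$.

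The main obstacle is the irreducibility step: one must select the correct power $\psi = \sigma^{p^a-1}$, recognize that its action on $G/Z(G)$ is precisely the norm-one subgroup inside a Singer cycle of $\GL_{2a}(p)$, and exploit the arithmetic fact that $p^a+1$ strictly exceeds $p^d-1$ for every proper divisor $d$ of $2a$. Granting Corollary~\ref{ultraspecial} and the well-known order of a Sylow $p$-subgroup of $\SU_3$, the order bookkeeping for $\psi$, the computation of $m$, and the final cardinality argument that eliminates $Z$ are all routine.
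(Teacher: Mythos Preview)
Your proof is correct and follows the paper's strategy exactly: pass to $\psi=\sigma^{p^a-1}$, verify the hypotheses of Theorem~\ref{theorem:beisiegel} (including $m=2a$), and conclude by comparing $|G|=p^{3a}$ with $|Q|$. The only difference is that the paper cites \cite[Theorems~2.3.2 and~2.3.3]{Sho} for the irreducibility of $\psi$ on $G/Z(G)$ and uses a separate congruence argument for $m=2a$, whereas your self-contained Singer-cycle argument---using that every proper divisor $d$ of $2a$ satisfies $d\le a$ and hence $p^d-1<p^a+1$---handles both points at once.
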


\begin{proof}
Let $\sigma$ be an automorphism of $G$ of order $|G:G'|-1$ and let $\psi \coloneqq \sigma^{p^a-1}$. We show that $\psi$ satisfies the prerequisites of Theorem~\ref{theorem:beisiegel}. By Corollary~\ref{ultraspecial}\,(2), $\sigma_{Z(G)}$ has order $p^a-1$, so $\psi$ acts trivially on $Z(G)$. We claim that 
\[
\min\{k \in \N \colon \ord(\psi) \mid p^k-1\}= 2a.
\]
To see this, let $k \in \N$ be minimal such that $\ord(\psi) = p^a +1$ divides $p^k-1$. Clearly, we have $k \leq 2a$. Now write $m \cdot (p^a+1) = p^k-1$. Then $m \equiv -1 \pmod{p^a}$ follows. In particular, we obtain $m \geq p^a-1$ and hence $k = 2a$. Finally, it follows from \cite[Theorems~2.3.2 and~2.3.3]{Sho} that the action of $\psi$ on $G/Z(G)$ is irreducible. 
	
With this, Theorem~\ref{theorem:beisiegel} yields that $G$ is isomorphic to $Q/Z$ for a Sylow $p$-subgroup $Q$ of $\SU_3(p^{2a})$ and a subgroup~$Z$ of $Z(Q)$. Due to $|G| = p^{3a} = |Q|$ (this follows from Corollary~\ref{ultraspecial}\,(1)), we obtain $G \cong Q$ and the claim follows. 
\end{proof}

\begin{Remark}\label{rem:slsu}
For $p > 2$, the Sylow $p$-subgroups of $\SU_3(p^{2k})$ and $\SL_3(p^k)$ are isomorphic. To see this, observe that a Sylow $p$-subgroup $P$ of $\SU_3 (p^{2k})$ is ultraspecial of order $p^{3a}$ and all the centralizers of noncentral elements of $P$ are abelian. By \cite[Theorem~5.10]{Ver}, $P$ is isomorphic to a Sylow $p$-subgroup of $\SL_3 (p^k)$. A colleague has pointed out that this can also be proven directly using unitriangular matrices.

In contrast, one can show that the Sylow $2$-subgroups of $\SL_3 (2^k)$ and $\SU_3(2^{2k})$ contain different numbers of involutions and hence they are not isomorphic.
\end{Remark}

Remark~\ref{rem:slsu} shows that Corollary~\ref{Intro odd} is an immediate consequence of Theorem~\ref{Intro}. As a converse to Theorem~\ref{theo:classification}, we note the following: 

\begin{lemma}\label{lemma:suauto}
Let $P$ be a Sylow $p$-subgroup of $\SU_3(p^{2k})$. Then $P$ has an automorphism of order~$p^{2k}-1$. 
\end{lemma}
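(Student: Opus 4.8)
The plan is to realize $P$ concretely as a matrix group and then produce the automorphism by conjugation with a suitable diagonal matrix. Set $q \coloneqq p^k$, write $\overline{x} \coloneqq x^q$ for the involutory field automorphism of $\F_{q^2}$, and realize $\SU_3(q^2) \leq \GL_3(q^2)$ via the Hermitian form whose Gram matrix is the antidiagonal identity. Since all Sylow $p$-subgroups are conjugate, we may take $P$ to be the group of upper unitriangular matrices of $\SU_3(q^2)$, namely
\[
P = \left\{\, M(u,v) \coloneqq \begin{pmatrix} 1 & u & v \\ 0 & 1 & -\overline{u} \\ 0 & 0 & 1 \end{pmatrix} \;\colon\; u, v \in \F_{q^2}, \ v + \overline{v} + u\overline{u} = 0 \,\right\}.
\]
First I would record the standard structural facts: $|P| = q^3$, and $P' = Z(P) = \{\, M(0,v) \colon v + \overline{v} = 0 \,\}$ has order $q$, so $|P : P'| = q^2 = p^{2k}$. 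Thus the goal is to exhibit an automorphism of $P$ of order $q^2 - 1$.

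Next, let $\zeta$ be a generator of the cyclic group $\F_{q^2}^{\times}$ and put $D \coloneqq \operatorname{diag}(\zeta, 1, \zeta^{-q})$. A direct check shows $D \in \GU_3(q^2)$; as $D$ is diagonal it lies in the Borel subgroup of $\GU_3(q^2)$ whose unipotent radical is $P$, so $D$ normalizes $P$ (equivalently, one verifies $D P D^{-1} = P$ by hand). Conjugation by $D$ therefore induces an automorphism $\varphi$ of $P$, and a computation with the entries gives
\[
\varphi(M(u,v)) = D\, M(u,v)\, D^{-1} = M\!\left(\zeta u,\, \zeta^{q+1} v\right);
\]
here the defining relation is preserved because $\zeta^{q+1} \in \F_q^{\times}$, so it is merely rescaled: $\zeta^{q+1}\bigl(v + \overline{v} + u\overline{u}\bigr) = 0$.

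Finally I would compute the order of $\varphi$. Iterating the formula above yields $\varphi^n(M(u,v)) = M(\zeta^n u, \zeta^{n(q+1)} v)$ for all $n \in \N$. As $M(u,v)$ ranges over $P$, the entry $u$ ranges over all of $\F_{q^2}$ (the trace map $\F_{q^2} \to \F_q$, $v \mapsto v + \overline{v}$, is surjective, so the relation can always be solved for $v$). Hence $\varphi^n = \operatorname{id}$ forces $\zeta^n = 1$, i.e.\ $(q^2 - 1) \mid n$; the converse is immediate. Therefore $\ord(\varphi) = q^2 - 1 = p^{2k} - 1$, which proves the lemma.

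As for difficulty, there is essentially no genuine obstacle here: the argument is a direct construction, and the only slightly delicate point is bookkeeping — checking that conjugation by $D$ preserves both the shape $w = -\overline{u}$ and the norm relation. One should, however, note that it is necessary to pass from $\SU_3(q^2)$ to the larger group $\GU_3(q^2)$: the diagonal torus of $\SU_3(q^2)$ acts on $P/P'$ by $u \mapsto \alpha^{2-q} u$ for $\alpha \in \F_{q^2}^{\times}$, and since $\gcd(q-2, q^2-1)$ can exceed $1$, this action need not contain an element of full order $q^2 - 1$; the element $D$ above, whose determinant $\zeta^{1-q}$ is generally not $1$, remedies this.
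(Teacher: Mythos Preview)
Your argument is correct: the explicit conjugation by $D=\operatorname{diag}(\zeta,1,\zeta^{-q})\in\GU_3(q^2)$ does exactly what is needed, and your order computation via the $u$-coordinate is clean. The paper does not actually give a proof here; it simply invokes Huppert's Satz~II.10.12(b), whose content is precisely the construction you have written out --- the diagonal torus of $\GU_3(q^2)$ normalizes $P$ and furnishes a cyclic group of automorphisms of order $q^2-1$ --- so your approach is the same as the referenced one, only made self-contained, and your closing remark about why $\SU_3$ alone does not suffice is a nice bonus.
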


\begin{proof}
This is essentially proved as Satz II.10.12 (b) in \cite{HUP67}.  
\end{proof}

This completes the proof of Theorem~\ref{Intro}. For $p> 2$, the automorphism groups of the Sylow $p$-subgroups of $\SL_3 (p^k)$ have been constructed in \cite[Theorem~4.1]{LeWi}. This can be used for an alternative proof of Lemma~\ref{lemma:suauto} if $p$ is odd. 

We end the note with an observation by Bill Kantor from a presentation of this paper at the conference in Denver Colorado for the 75th birthday of Jon Hall in August 2024.  As we have noted above, our automorphism $\sigma$ acts transitively on $Z(P) \setminus \{1\}$.  On the other hand, we know that $\sigma$ transitively permutes the nontrivial cosets of $Z(P)$ in $P$ and each element in $P \setminus Z(P)$ is conjugate to its coset by $Z(P)$.  Hence, all the elements outside of $Z(P)$ have the same order.  When $P$ is a $2$-group, this implies that $Z(P)$ contains all of the involutions and $\sigma$ is transitively permuting the involutions.  It follows that $P$ is a Suzuki $2$-group.  

In \cite{Higman}, Higman classified the Suzuki $2$-groups into four types: type A, type B, type C, and type D. The groups of type A have been studied more widely (see Section VIII.6 in \cite{HuBlII}).  On the other hand, the groups of type B, type C, and type~D are often S.E.S. groups, whereas the groups of type A are not (see Table 3, Table 4, and  Lemma 2.5 in \cite{DFH}).  We have seen that the groups in Theorem \ref{Intro} are isomorphic to the Sylow $2$-subgroups of $\SU_3 (2^{2a})$.  The fact that these groups are type B is mentioned in the second paragraph of \cite{collins}.  We also refer the reader to the argument given in \cite{overflow}.


\section*{Acknowledgments}

The research of the first author has received funding from the European Research Council (ERC) under the European Union’s Horizon 2020 research and innovation programme (EngageS: grant agreement No.~820148) and from the German Research Foundation DFG (SFB-TRR 195 “Symbolic Tools in Mathematics and their Application”). We thank  Joshua Maglione, Geoffrey Robinson, and David Craven for some helpful comments. 

\end{document}